\numberwithin{equation}{section}
\theoremstyle{plain}
\newtheorem{thm}{Theorem}[section]
\newtheorem{cor}[thm]{Corollary}
\newtheorem{problem}{Problem}
\theoremstyle{definition}
\newtheorem{df}[thm]{Definition}
\newtheorem{nt}[thm]{Notation}
\newtheorem{rmk}[thm]{Remark}
\newcommand{\m}[1]{{\mathbf{\uppercase{#1}}}}
\newcommand{\converse}[1]{{\breve{#1}}}
\newcommand{\re}{{\rm Rel}}
\newcommand{\rel}{{\rm Rel}}
\newcommand{\mrel}{{\bf Rel}}
\newcommand{\fn}{{\rm Fn}}
\newcommand{\at}{{\rm At}}
\newcommand{\pr}{{\rm Pr}}
\newcommand{\id}{{\rm 1\hbox{\textquoteright}}}
\newcommand{\Z}{{\mathbb{Z}}}
\renewcommand{\;}{\mathbin{;}}
\title{Relation Algebras Compatible with $\mathbb{Z}_2$-sets}
\author{Jeremy F.~Alm, John W.~Snow}
\date{January 2025}
\begin{document}

\maketitle
\begin{abstract}
We provide a characterization of those relation algebras which are isomorphic to the algebras of compatible relations of some $\Z_2$-set. It follows that  this class is finitely axiomatizable in first-order logic in the language of relation algebras.\\
MSC 03G15. Keywords: relation algebra, representation, group action, axiomatizability. 
\end{abstract}

\section{Introduction}

In \cite{Jonsson1984} J\'onsson introduced a type of relation algebra representation related to group actions. Suppose that $G$ is a group of permutations on a set $A$, and let $\m A=\langle A, G \rangle$ be the associated $G$-set viewed as a unary algebra. A binary relation $R$ on $A$ is \emph{compatible} with $\m a$ if for every $\langle x,y \rangle \in R$ and for every $g \in G$ the pair $\langle g(x), g(y) \rangle$ is also in $R$.  The set of all binary relations on $A$ compatible with $\m a$ is closed on the relation algebra operations. J\'onsson calls a relation algebra arising in this way \emph{Galois closed} and proves that every complete concrete relation algebra whose atoms are functional is Galois closed.  In this article, we address the question of which abstract relation algebras have representations which are Galois closed. 
We call a relation algebra {\bf B}  \emph{group-action representable} (or say that $\m b$ is a \textsf{GARRA}) if for some group $G$ there is a $G$-set $\m A$ so that $\m B$ is isomorphic to the relation algebra of binary relations which are compatible with $\m a$.   After providing some examples of \textsf{GARRA}s, we give a characterization of all \textsf{GARRA}s whose associated group is $\Z_2$.  These turn out to be the collection of simple, pair-dense relation algebras in which every atom or its converse is functional. (In a concrete algebra, pair-density means every element below the identity contains a ``pair'' $\{( a,a ), ( b,b )\}$.) An immediate corollary to this result is that the collection of \textsf{GARRA}s whose associated group is $\Z_2$ is finitely  axiomatizable in first-order logic in the language of relation algebras.

\section{Background}

We assume that the reader is familiar with relation algebras and group actions.
For background information on relation algebras, we refer the reader to \cite{MadduxBook}.  For background information on group actions, we recommend any standard graduate text on algebra such as \cite{DummitAndFoote}.

\begin{df}
    Let $G$ be a group, let $A$ be a non-empty set, and let $G$ act on $A$, which we will denote by left multiplication. Let $\m A$ be the algebra with universe $A$ and the unary operations that arise from the action of $G$ on $A$. We say that a binary relation $R\subseteq A\times A$ is a \emph{compatible relation} on $\m A$ if for every $(x,y)\in R$ and every $g\in G$, $(gx,gy)\in R$. 
    Let $\rel(\m A)$ be the set of compatible relations on the $G$-set $\m A$. We also use $\rel(U)$ to represent the set of all binary relations on any set $U$.
\end{df}

That is, a compatible relation on $\m A$ is merely a subuniverse of $\m A^2$ in the language of universal algebra (for background information on universal algebra, which is not necessary to read further, see \cite{alvi}).  If $\m a$ is a $G$-set, then $\rel(\m a)$ is closed under the relation algebra operations in $A^2$. It is an easy exercise to that
the identity and universal relations are compatible and to
prove closure under intersection, converse, and composition.  That $\rel(\m a)$ is closed under unions follows from the fact that the operations of $\m a$ are unary.  Closure under complementation follows from the fact that the operations of $\m A$ are bijective and their inverses are also opearations of $\m A$.

\begin{df}
Suppose that $\m a$ is a $G$-set. We denote the relation algebra 
$$\langle \rel(\m A), \cup, \cap,\, ^{c}, \emptyset, A^2, \circ, \, ^{-1} , {\rm id}_A \rangle $$ as $\mrel(\m a).$
\end{df}

\begin{rmk}
Some researchers in relation algebra will denote the relation $R\circ S$ as $R|S$ and will use $R \circ S$ to represent $S|R$ (for example, see \cite{MadduxBook}). 
The results in this manuscript are focused on binary relations which are compatible with universal algebras. Familiar examples of such relations include congruence relations and tolerances. We have intentionally chosen to define composition with this order and with these symbols because doing so is consistent with traditional usage in universal algebra
(for example see \cite{alvi} and \cite{burrisSanka} for notation and order and \cite{gratzer} for order).
 Generally, we will use $R \circ S$ when  $R$ and $S$ are relations on a concrete set. We will use $R\;S$ when $R$ and $S$ are elements of an abstract relation algebra.  $R^c$ denotes the complement of a relation $R$.  $R^{-1}$ denotes the converse of $R$, and ${\rm id}_A$ denotes the identity relation on $A$.
\end{rmk}

\begin{df}
    Given a relation algebra $\m B$, we say that $\m B$ has a \emph{group-action representation} or is \emph{group-action representable} or is a \emph{\textsf{GARRA}}  if there exists a group $G$ and a $G$-set $\m a$ such that $\m B$ is isomorphic to $\mrel(\m a)$.  If there is such a $G$ and $\m A$, we say that $\m B$ is \emph{$G$-action representable}.
\end{df}

 In \cite{Jonsson1984}, J\'onsson calls concrete relation algebras of the form $\mrel(\m A)$ for some $G$-set $\m a$ \emph{Galois-closed} since these are the closed sets in the Galois connection between binary relations and permutations on the universe of $\m a$. J\'onsson addresses Galois-closed relation algebras whose atoms are functional. Below, we will be concerned with certain relation algebras in which every atom or its converse is functional.

The representation result we prove here concerns pair-dense relation algebras, introduced by Maddux in \cite{Maddux1991}. 

\begin{df}
Let $\m B$ be a relation algebra, and let $x$ be a nonzero element of $\m B$. We say that $x$ is a \emph{point} if $x \; 1 \; x \leq 1\hbox{\textquoteright}$.
We say that $x$ is a pair if $x \; 0\hbox{\textquoteright} \; x \; 0\hbox{\textquoteright} \; x \leq 1\hbox{\textquoteright}$.  A \emph{twin} is a pair that does not contain a point. $\m B$ is \emph{pair-dense} if every non-zero element below $1\hbox{\textquoteright}$ contains (is greater than or equal to) a pair.  
\end{df} 

\begin{rmk}
Here 
$1\hbox{\textquoteright}$ denotes the identity element, and $0\hbox{\textquoteright}$ is the diversity element (the complement of the identity).
\end{rmk}

The definition of point is intended to describe a one-element subset of the identity relation. The definition of twin is intended to identify a two-element subset of the identity.
In a subalgebra $\m B$ of the algebra $\rel(U)$ of binary relations on a set $U$, points are relations of the form $\{( a,a ) \}$, pairs are relations of the form $\{( a,a ), ( b,b )\}$, and twins are relations of the form $\{( a,a ), ( b,b )\}$ for which neither $\{( a,a )\}$ nor $\{( b,b )\}$ is a point in $\m B$. 

\begin{nt}
We will use $\at(\m B)$ to denote the set of atoms of a relation algebra $\m B$. $\pr (\m b)$ will denote the set of pairs in $\m b$. $\fn(\m b)$ is the set of functional elements in $\m b$ (elements $x$ which satisfy $x^{-1}\;x \leq 1\hbox{\textquoteright}$).
\end{nt}

\section{The Structure of Pair-Dense Algebras} \label{structure}

Here we briefly summarize the structure of pair-dense relation algebras derived by Maddux in \cite{Maddux1991}.
Suppose that $\m R$ is a simple, complete, pair-dense relation algebra.
Since $\m R$ is simple and pair-dense, $\m R$ is completely representable by Theorem 51 in \cite{Maddux1991} and is atomic by Theorem 48 in \cite{Maddux1991}.  Moreover, the discussion on pages 86 and 87 of \cite{Maddux1991} describe the representation exactly in the case when $\m R$ is complete.  There is a set $U$ and a relation algebra $\m B \subseteq \re (U)$ with $\m R \cong \m B$.   
Let $P$ be the set of all $\{a\} \subseteq U$ for which $\{( a,a )\}$ is a point in $\m B$. Let $T$ be the set of all two-element subsets $\{a,b\}$ of $U$ for which $\{( a,a ), ( b,b )\}$ is a pair which does not contain a point. 
$P \cup T$ is a partition of $U$ into one- and two-element subsets. The discussion on pages 86 and 87 of \cite{Maddux1991} describes the atoms of $\m B$.
There is an equivalence relation $\sim$ on $T$ so that the atoms of $\m B$ are specified in this way:
\begin{enumerate}
\item If $\{a\} \in P$, then $\{( a,a ) \}$ is an atom.
\item If $\{a,b\} \in T$ then $\{( a,a ), ( b,b )\}$ and $\{ ( a,b ), ( b,a ) \}$ are atoms.
\item If $\{a\},\{b\} \in P$, then $\{( a,b )\}$ and $\{ ( b,a )\}$ are atoms.
\item If $\{a,b\} \in T$ and $\{c\} \in P$, then $\{( a,c), ( b,c ) \}$ and its converse are atoms.
\item If $\{a,b \}, \{c,d\} \in T$ and $\{a,b\} \sim \{c,d\}$, then $\{ ( a,c), ( b,d )\}$ and $\{( a,d), ( b,c )\}$ and their converses are atoms.
\item If $\{a,b \}, \{c,d\} \in T$ and $\{a,b\} \not \sim \{c,d\}$, then $\{ ( a,c), ( b,d ), ( a,d), ( b,c )\}$ and its converse are atoms.
\end{enumerate}

\section{Examples}

For our first example, we start with a group action and see which relation algebra we get. Consider $\mathbb{Z}_3$ acting on itself.  We can calculate the atoms of the algebra of compatible relations as follows. 

Consider the pair $(0,0)$. By letting each element of $\mathbb{Z}_3$ act on $(0,0)$, we get the identity $\mathrm{Id} = \{(0,0), (1,1), (2,2) \}$. 
Similarly, by starting with $(0,1)$ and applying the action, we get $R = \{(0,1), (1,2), (2,0)\}$. 
Finally, by starting with $(1,0)$ and applying the action, we get $R^{-1} = \{(1,0), (2,1), (0,2)\}$.  This exhausts all nine pairs, and we have relation algebra $2_3$ (see \cite{MadduxBook} for the numbering system), with atoms $1\hbox{\textquoteright}$, $r$, and $\breve{r}$, with sole forbidden cycle $rrr$.  (A cycle of $abc$ of diversity atoms is called \emph{mandatory} if $a\;b\geq c$ and \emph{forbidden} if $a\;b\cdot c = 0$.)

For our second example, we will start with the abstract relation algebra and construct a group action. Consider relation algebra $5_7$, with atoms $1\hbox{\textquoteright}$, $a$, and $b$, all symmetric, with forbidden cycles $aaa$ and $bbb$. It is well known (folklore) that this algebra is representable over a 5-point set only \cite{MR1334290}. Up to isomorphism, its unique representation over $U=\{0,1,2,3,4\}$ has atoms as follows:
\begin{itemize}
    \item $\{(0,0),(1,1),(2,2),(3,3),(4,4)\}$
    \item $\{(0,1),(1,2),(2,3),(3,4),(4,0),(1,0),(2,1),(3,2),(4,3),(0,4)\}$
    \item $\{(0,2),(1,3),(2,4),(3,0),(4,1), (2,0), (3,1),(4,2),(0,3),(1,4)\}$
\end{itemize}

Now we define an action of the dihedral group $D_5 = \langle r,s\rangle$ on $U=\{0,1,2,3,4\}$, where $r = (0\ 1\ 2\ 3\ 4)$ and $s = (1\ 4)(2\ 3)$.  The interested reader is invited to check that applying every element of $D_5$ via this action to $(0,0)$, $(0,1)$, and $(0,2)$, respectively, generates the itemized sets above.

\section{Pair-Dense Relation Algebras and Group Actions}
\begin{thm} \label{atomsAlmostFunctions}
Suppose that $\m R$ is the algebra of binary relations compatible with a $G$-set $\m A$ where $G$ is a cyclic group of prime order $p$. If $r$ is an atom of $\m R$, then either $r$ or $\converse r$ is a function.
\end{thm}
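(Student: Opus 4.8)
The plan is to work concretely inside $\mrel(\m A)$, exploiting the fact that the compatible relations are exactly the $G$-invariant subsets of $A^2$ under the diagonal action $g\cdot(x,y)=(gx,gy)$. The Boolean atoms of this subalgebra are then precisely the orbits of that action, so any atom $r$ has the form $r=\{(gx,gy):g\in G\}$ for some $(x,y)\in A^2$. (Concretely: a nonzero compatible relation contains some pair, hence its whole orbit, so a minimal nonzero one must be a single orbit.) I would also record the translation of the two relevant notions: $r$ is a \emph{function} means $\converse r\,;r\le\id$, i.e.\ $r$ is right-unique as a concrete relation, while $\converse r$ is a function means $r\,;\converse r\le\id$, i.e.\ $r$ is left-unique. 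Thus the theorem reduces to showing that every orbit is right-unique or left-unique.

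First I would suppose that $r=\{(gx,gy):g\in G\}$ is not a function, i.e.\ not right-unique, and deduce that $\converse r$ is. Failure of right-uniqueness produces $g,h\in G$ with $gx=hx$ but $gy\ne hy$; putting $k=g^{-1}h$ gives a $k\in G$ with $kx=x$, $ky\ne y$, and in particular $k\ne e$. This is the only point at which the hypothesis on $G$ enters: since $G$ is cyclic of prime order $p$, the nontrivial element $k$ generates $G$, so $kx=x$ forces $g'x=x$ for every $g'\in G$; that is, $x$ is a global fixed point of the action. But then every pair of $r$ has first coordinate $gx=x$, so $r\subseteq\{x\}\times A$, whence $\converse r\subseteq A\times\{x\}$ is right-unique, i.e.\ $\converse r$ is a function. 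Since $r$ being non-functional already forces $\converse r$ to be a function, the dichotomy is complete and no separate symmetric argument is required.

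Equivalently, one can route the argument through stabilizers $G_x=\{g:gx=x\}$: a one-line check shows that $r$ is a function iff $G_x\subseteq G_y$ and that $\converse r$ is a function iff $G_y\subseteq G_x$, so it suffices that $G_x$ and $G_y$ be comparable, and for $G=\Z_p$ the subgroup lattice is the two-element chain $\{e\}\subset G$, making comparability automatic. The main obstacle here is conceptual rather than computational: it is the realization that primality is exactly what makes a single nontrivial stabilizing element fix its point globally (because it generates all of $G$), thereby collapsing one coordinate of the orbit to a constant. For a general group the stabilizers $G_x,G_y$ can be incomparable proper subgroups and an orbit of $A^2$ may be neither right- nor left-unique, so no routine generalization of the calculation survives; the chain structure of the subgroup lattice of $\Z_p$ is genuinely what the proof rests on.
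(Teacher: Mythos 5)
Your proof is correct and rests on the same core idea as the paper's: atoms are orbits of the diagonal action, and primality forces each coordinate's orbit to be either a single fixed point or $p$ distinct elements, which the paper organizes as a case split on whether a fixed generator moves $x$ or $y$ and you organize as the contrapositive via stabilizers. Your closing observation that the argument really only needs $G_x$ and $G_y$ to be comparable (hence works for any group whose subgroups form a chain) is a nice extra not present in the paper, but the proof itself is essentially the same.
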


\begin{proof}
Let $G$ be a group of prime order $p$ written in multiplicative notation, and let $g$ be a generator for $G$.  Let $\m A$ be a $G$-set, and let $\m R$ be the algebra of binary relations compatible with $\m A$.  Suppose that $r$ is an atom in $\m R$. There is an ordered pair of elements $( x,y )$ in $\m A$ so that $r=\{ ( g^n x, g^n y ) : n=0, 1, \ldots, p-1\}$.  If $gy=y$, then $r$ is the graph of a constant function (constantly $y$). If $gx=x$, then $\converse r$ is the graph of a constant function (constantly $x$).  If $gx\neq x$ and $gy \neq y$, then $x, gx, g^2x, \ldots, g^{p-1}x$ are $p$ distinct elements of $\m A$, and $y, gy, g^2y, \ldots, g^{p-1}y$ are $p$ distinct elements of $\m A$. Since no first or second coordinate in $r$ is repeated $r$ is a function (and so is $\converse r$).
\end{proof}

This condition that each atom or its converse is a function is essential to our main result below. However, it is interesting in and of itself as it implies representability for atomic relation algebras.  We first need a theorem due to Maddux:

\begin{thm} \label{Maddux1978} \upshape{(See Theorem 7 of \cite{Maddux1978} or Theorem G  of \cite{Maddux1991})}  If $\m R$ is a relation algebra and $\sum\{\converse x   	\; y : x,y \in \fn (\m R)\} = 1$ then $\m R$ is representable.
\end{thm}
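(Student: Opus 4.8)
The plan is to produce a concrete representation by using the functions as a coordinate system. First I would reduce to the case that $\m R$ is simple: the top decomposes as a join of ideal elements into simple components, and a functional atom lives inside a single component (its domain and range subidentities lie in the same block), so the hypothesis $\sum\{\breve x ; y : x,y\in\fn(\m R)\}=1$ holds component-by-component; representing each simple component and taking a disjoint union then represents $\m R$. So assume $\m R$ is simple, whence $u ; 1 ; v \ne 0$ for all nonzero $u,v$.

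Next I would construct the base set $U$ of the representation out of the functional elements rather than out of the subidentity atoms. This is the essential point: a subidentity ``pair'' must be split into two genuine points, and it is precisely the functions landing in it that let us distinguish the two halves (a nontrivial involution below $\id_e$ cannot be realized if $e$ is a single point). Concretely I would take $U$ to be a suitable set of atomic functions (the \emph{point-functions}), identified so that composition with them acts as honest function application; each element $r\in\m R$ is then represented by $\rho(r)=\{(p,q)\in U\times U : \breve p ; r ; q \ne 0\}$. The Boolean operations and converse transfer routinely, and single-valuedness makes each functional atom the graph of an actual partial function on $U$. Faithfulness of $\rho$, and $\rho(1)=U\times U$, then follow from simplicity together with the covering hypothesis, which guarantees that every pair of points $(p,q)$ is witnessed by a common source through some $\breve x ; y$.

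The main obstacle is preserving composition, and specifically the inclusion $\rho(r)\,;\,\rho(s)\subseteq\rho(r ; s)$: given points $p,m,q$ with $\breve p ; r ; m\neq0$ and $\breve m ; s ; q\neq0$, one must glue the two local witnesses into a single witness for $\breve p ;(r ; s); q\neq 0$. This amalgamation is exactly what fails for non-representable algebras, and it is where the functional hypothesis does the real work: single-valued elements obey the Dedekind/modular law $f;(a\cdot \breve f ; b)=(f;a)\cdot b$, which lets me transport one witness along a functional atom through the midpoint $m$ and fuse it with the other. I would isolate this as a combinatorial lemma on atomic functions and then assemble the representation from it.

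A final caution on the reduction to atoms: one cannot simply pass to the canonical extension, since the infinite join $\sum\{\breve x ; y\}$ need not remain equal to $1$ there (new ultrafilter atoms may avoid the whole family). I would therefore either work in the join-preserving completion and carry out the coordinatization without full atomicity, or---more robustly---run the coordinatization as a step-by-step Henkin construction directly on $\m R$, in which the covering hypothesis supplies, at each stage, the functional witness needed to satisfy the next composition requirement. Either way the crux remains the single amalgamation lemma above.
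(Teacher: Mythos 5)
The paper does not actually prove this statement: it is imported verbatim from Maddux (Theorem 7 of \cite{Maddux1978}, Theorem G of \cite{Maddux1991}) and used as a black box, so there is no in-paper argument to compare yours against. Judged on its own terms, your outline points in the historically right direction---coordinatizing by functional elements and amalgamating witnesses via the Dedekind/modular law for functions is essentially the Tarski--Maddux technique for quasiprojective and functionally dense algebras---but as written it is a plan rather than a proof, and its load-bearing steps are missing or wrong.

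Concretely: (i) the reduction to the simple case is not as you describe, since the unit of a relation algebra is not in general a join of ideal elements giving finitely many ``simple components''; one must use the subdirect decomposition into simple quotients, and then the hypothesis---an \emph{infinite} join equal to $1$---is not automatically inherited because surjective homomorphisms need not preserve infinite joins, so this step needs its own argument (e.g.\ recasting the hypothesis as ``every nonzero $a$ meets some $\converse{x};y$'' and showing that survives the quotient). (ii) The base set of ``point-functions'' need not exist, since $\m R$ is not assumed atomic and functional elements need not sit above minimal ones; you acknowledge this and defer to a Henkin-style construction, but that construction is precisely where the content of the theorem lives. (iii) The amalgamation lemma---gluing $\converse{p};r;m\neq 0$ and $\converse{m};s;q\neq 0$ into $\converse{p};(r;s);q\neq 0$---is asserted, not proved, and the identity $f;(a\cdot \converse{f};b)=(f;a)\cdot b$ you invoke holds when $f;\converse{f}\leq \id$ (injectivity) rather than $\converse{f};f\leq \id$ (functionality) under the usual convention, so even the stated tool needs care. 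Until (iii) is actually established the proposal is not a proof; for the purposes of this paper the correct move is simply to cite Maddux, as the authors do.
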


\begin{thm}
If $\m R$ is an atomic relation algebra in which every atom or its converse is functional, then $\m R$ is representable.
\end{thm}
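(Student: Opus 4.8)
The plan is to apply Theorem \ref{Maddux1978}, so it suffices to verify its hypothesis, namely that $\sum\{\converse{x} ; y : x,y \in \fn(\m R)\} = 1$. I would reduce this to a statement about atoms: since $\m R$ is atomic, $1$ is the supremum of $\at(\m R)$, so it is enough to show that every atom lies below $\converse{x} ; y$ for some pair of functions $x,y \in \fn(\m R)$.

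Fix an atom $r$. By hypothesis either $r$ or $\converse r$ is a function, and the identity $\id$ is always a function, since $\converse{\id} ; \id = \id \leq \id$. In the first case I would write $r = \converse{\id} ; r$, exhibiting $r$ in the form $\converse{x} ; y$ with $x = \id$ and $y = r$, both functions. In the second case I would use $r = \converse{(\converse r)} ; \id$, so that $x = \converse r$ and $y = \id$ are both functions. Either way $r \leq \converse{x} ; y$ with $x,y \in \fn(\m R)$.

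To finish, I would observe that any upper bound $u$ of the set $\{\converse{x} ; y : x,y \in \fn(\m R)\}$ dominates every atom, hence dominates $\sum \at(\m R) = 1$ by atomicity; since $1$ is trivially an upper bound as well, the supremum of this set exists and equals $1$. Theorem \ref{Maddux1978} then yields representability of $\m R$.

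I do not anticipate a serious obstacle: the content lies entirely in the bookkeeping identities $r = \converse{\id} ; r$ and $r = \converse{(\converse r)} ; \id$, together with the observation that $\id \in \fn(\m R)$. The one point requiring care is the passage from ``each atom is dominated'' to ``the sum equals $1$,'' which is precisely where atomicity is invoked; one must confirm that the relevant supremum genuinely exists rather than merely exhibiting an upper bound, and this is handled by noting that $1$ is simultaneously an upper bound and the least such bound.
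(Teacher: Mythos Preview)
Your proposal is correct and follows essentially the same route as the paper: invoke Theorem~\ref{Maddux1978}, and for each atom $r$ use $r=\converse{\id};r$ when $r$ is a function and $r=\converse{(\converse r)};\id$ when $\converse r$ is, then appeal to atomicity to conclude the supremum is $1$. Your treatment is in fact slightly more careful than the paper's in explicitly verifying that $\id\in\fn(\m R)$ and in checking that the supremum genuinely exists rather than just writing it down.
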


\begin{proof}
Suppose $\m R$ is an atomic relation algebra in which every atom or its converse is functional. We will use Theorem \ref{Maddux1978} to prove that $\m R$ is representable. Let $F$ be the set of atoms of $\m R$ which are functions, and let $N$ be the set of atoms of $\m R$ whose converses are functions.  
Also let $X=\sum\{\converse x \; y : x,y \in \fn \m R\} $. It is sufficient to prove that $1 \leq X$.
Note that if $x \in F$ then $x=\converse{\id} \; x \leq X$.
If $x \in N$ then $\converse x \in F$ and $x=\converse {\converse x} \; \id \leq X$.  Since $\m R$ is atomic, 
$$1=\sum_{x \in \at (\m R)} x=\left(\sum_{x \in F} x \right) + \left(\sum_{x \in N} x \right) \leq X+X=X.$$
By Theorem \ref{Maddux1978}, we can now conclude $\m R$ is representable.
\end{proof}

We now move on to our main theorem.

\begin{thm}
A  relation algebra $\m R$ is isomorphic to the algebra of binary relations compatible with a $G$-set where $G \cong \Z_2$ if and only if $\m R$ is simple, complete, and pair-dense, each atom of and $\m R$ or its converse is a function.
\end{thm}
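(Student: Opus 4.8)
The plan is to prove the two implications separately, with essentially all of the content in the ``if'' direction. For the ``only if'' direction, suppose $\m R \cong \mrel(\m A)$ for some $\Z_2$-set $\m A$ with underlying set $A$ and generator $g$. Simplicity is immediate: $\mrel(\m A)$ contains the universal relation $A^2$, and for any nonzero compatible relation $a$ one computes $1 ; a ; 1 = A^2 = 1$, which is the simplicity criterion. That each atom or its converse is a function is exactly Theorem \ref{atomsAlmostFunctions} specialized to $G \cong \Z_2$. For pair-density, the atoms of $\mrel(\m A)$ are the orbits of the diagonal $\Z_2$-action on $A^2$; an atom below $\id$ is the orbit of some $(x,x)$, hence is $\{(x,x)\}$ when $gx = x$ and $\{(x,x),(gx,gx)\}$ when $gx \neq x$. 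The first is a point and the second a twin, and both are pairs (for a point $p$ one has $p ; 0' ; p = 0$), so every nonzero element below $\id$ dominates a pair.

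For the ``if'' direction, assume $\m R$ is simple, pair-dense, and each atom or its converse is a function. Invoking the structure theory recalled in Section \ref{structure}, fix $\m R \cong \m B \subseteq \re U$ together with the partition $P \cup T$ of $U$ and the equivalence relation $\sim$ on $T$, so that the atoms of $\m B$ are exactly those listed in items (1)--(6). The crucial observation is that the function hypothesis forces $\sim$ to be the all-relation on $T$. Indeed, an atom of type (6) produced by $\not\sim$-related twins $\{a,b\}$ and $\{c,d\}$ equals $\{(a,c),(b,d),(a,d),(b,c)\}$; its first coordinate $a$ is repeated (as is $b$), so it is not a function, and the same repetition in its converse shows the converse is not a function either. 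Since by hypothesis no such atom can occur, every two distinct twins are $\sim$-related, i.e.\ $\sim \, = T \times T$.

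With $\sim$ total I build the $\Z_2$-set. Let $g\colon U \to U$ be the involution that fixes every $u$ with $\{u\} \in P$ and interchanges the two elements of every twin $\{a,b\} \in T$; as $g$ is an involution, sending the generator of $\Z_2$ to $g$ defines an action of $\Z_2$ on $U$, yielding a $\Z_2$-set $\m A$. The atoms of $\mrel(\m A)$ are the orbits $\{(x,y),(gx,gy)\}$ of the diagonal action on $U^2$, and a short case analysis according to whether $x$ and $y$ lie in points or twins shows these orbits are precisely the relations in items (1)--(5); for instance, when $x,y$ lie in distinct twins $\{a,b\}$ and $\{c,d\}$, the orbit of $(a,c)$ is $\{(a,c),(b,d)\}$ and that of $(a,d)$ is $\{(a,d),(b,c)\}$, which is exactly item (5) --- and it is item (5), not (6), precisely because $\sim$ is total. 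Hence $\mrel(\m A)$ and $\m B$ have the same atoms. Since these atoms partition $U^2$, every element of $\m B$ is the union of the atoms it dominates, so $\m B \subseteq \mrel(\m A)$; for the reverse inclusion one checks that $\m B$ contains all joins of its atoms, giving $\m B = \mrel(\m A)$ and therefore $\m R \cong \mrel(\m A)$.

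The heart of the argument, and the step I expect to be the main obstacle, is recognizing that ``each atom or its converse is a function'' is exactly the abstract trace of the $\Z_2$-orbit structure: it is precisely strong enough to forbid the type-(6) atoms, which are the only atoms in Maddux's classification that cannot arise as a two-element orbit of an involution. Once $\sim$ is shown to be total, the involution that swaps each twin is essentially forced and the remaining orbit computations are bookkeeping. The one point requiring care beyond this insight is upgrading ``same atoms'' to the genuine equality $\m B = \mrel(\m A)$, that is, verifying that the representation $\m B$ furnished by Section \ref{structure} consists of all joins of its atoms; this is where the completeness of that representation is used.
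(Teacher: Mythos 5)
Your proposal is correct and follows essentially the same route as the paper: Theorem \ref{atomsAlmostFunctions} plus the orbit of the diagonal action for the forward direction, and Maddux's structure theory with the exclusion of type-(6) atoms, totality of $\sim$, and the twin-swapping involution for the converse. The only cosmetic difference is that you phrase the final step as ``same atoms plus atomicity'' where the paper verifies the two inclusions $\m B \subseteq \rel(\m U)$ and $\rel(\m U) \subseteq \m B$ separately; the content is identical.
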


\begin{proof}
For the entirety of this proof suppose $G=\{1_G,g\}$ is a two-element group with multiplicative notation. 
Let $\m A$ be a $G$-set, and let $\m R$ be the algebra of binary relations compatible with $\m A$. 
By Theorem \ref{atomsAlmostFunctions} we know that each atom in $\m R$ or its converse is a function. In $\m R$, $1=A \times A$, so $\m R$ is simple. 
The relations in $\m R$ form an algebraic lattice in the universal algebra sense, so $\m R$ is complete. 
We need only show that $\m R$ is pair-dense.  Suppose that $( a,a ) \in \id$ in $\m R$.  Let $r$ be the universe of the subalgebra of $\m A^2$ generated by $( a,a )$. Then $r=\{( a,a), ( ga, ga ) \}$ is a pair and $( a,a ) \in r$. It follows that $\id = \sum \pr (\m R)$, so $\m R$ is pair-dense.  

Now suppose that $\m R$ is a simple, complete, pair-dense relation algebra and that each atom of $\m R$ or its converse is a function.  We must prove that $\m R$ is isomorphic to the algebra of binary relations compatible with a $G$-set.  Let $U$, $\m B$, $T$, $P$, and $\sim$ be as in the discussion of the structure of pair-dense relation algebras in section \ref{structure}.

Consider an atom $r=\{ ( a,c), ( b,d ), ( a,d), ( b,c )\}$ of type 6.  It must be that $c \neq d$ since the pairs in $T$ are distinct. Since $r$ contains $( a,c )$ and $( a,d )$, $r$ is not a function. Moreover, it must be that $a \neq b$  since the pairs in $T$ are distinct. Since $\converse r$ contains $( c,a )$ and $( c,b )$, $\converse r$ is not a function. This contradicts our assumption that every atom of $\m R$ or its converse is a function. Therefore, there are no atoms of type 6 in $\m B$. This implies that $\sim$ is the total relation on $T$ and that the only atoms of $\m B$ are of types 1--5.  

We now prove that $\m B$ is the algebra of binary relations compatible with a $G$-set on $U$. To do so, we define an action of $G=\{1_G,g\}$ on $U$. The element $1_G$ must act as the identity. If $\{a\} \in P$, define $ga=a$. If $\{a,b\} \in T$ define $ga=b$ and $gb=a$.  This defines an action of $G$ on $U$. Denote the resulting $G$-set as $\m U$. We must show that $\m B=\rel (\m U)$.  A quick check will show that each atom of types 1-5 above is closed under the action of $G$. This implies that the atoms of $\m B$ are in $ \rel (\m U)$. Since $\m B$ is atomic, $\m B \subseteq \rel (\m U)$.  

Now suppose that $r$ is an atom of $\rel (\m U)$. This implies that there exist $a,b \in U$ so that $r=\{( a,b ), ( ga, gb )\}$.  We proceed by cases on which of $\{a\}$ and $\{b\}$ may be in $P$.  Suppose that $\{a\},\{b\} \in P$. Then $ga=a$ and $gb=b$, so $r=\{( a,b )\}$ with $\{a\},\{b\} \in P$. In this case, $r$ is an atom of $\m B$ of type 3.  Now suppose that $\{a\} \in P$ and $\{b\} \not \in P$. Since $\{a\} \in P$, $ga=a$. Since $\{b\} \not \in P$, there is a $d \in U$ with $\{b,d\}\in T$. This implies $gb=d$ and $gd=b$. In this case, $r=\{( a,b ), ( a, d )\}$ with $\{a\} \in P$ and $\{b,d\} \in T$, so $r$ is an atom of $\m B$ of type 4 (it is the converse of the kind explicitly displayed in type 4). Finally, suppose that $\{a\} \not \in P$ and $\{b\} \not \in P$. There are two subcases here. Either $\{a,b\} \in T$ or not. If $\{a,b\} \in T$, then $ga=b$ and $gb=a$. This implies $r=\{( a,b ), ( b, a )\}$.  In this case, $r$ is an atom of $\m B$ of type 2 (the second kind in type 2).  Finally, if neither of $\{a\}$ or $\{b\}$ is in $P$ and if $\{a,b\} \not \in T$, then there must be $c,d\in U$ so that $\{a,c\} \in T$ and $\{b,d\} \in T$. This implies that $ga=c$, $gc=a$, $gb=d$, and $gd=b$. Therefore, $r=\{( a,b ), ( c, d )\}$.  In this case, $r$ is an atom of $\m B$ of type 5. 

We have proven that every atom of $\rel (\m U)$ is also an atom of $\m B$. Since $\rel (\m U)$ is atomic, $\rel (\m U) \subseteq \m B$ (note that $\m B$ is complete by Maddux's characterization). Since $\m R \cong \m B$, we now that that $\m R$ is isomorphic to the algebra $\rel (\m U)$ of binary relations compatible with the $G$-set $\m U$.

\end{proof}

\begin{cor}
    The collection $\mathbb{Z}_2$-$\textsf{GARRA}$ of \textsf{RA}s that are representable as algebras of compatible relations on a $\mathbb{Z}_2$-set is finitely axiomatizable  in first-order logic in the language of relation algebras. 
\end{cor}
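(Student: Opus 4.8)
The plan is to combine the preceding theorem with the observation that each of its three defining conditions is expressible by a single first-order sentence in the relation-algebra signature. By that theorem, a relation algebra lies in $\mathbb{Z}_2$-$\textsf{GARRA}$ if and only if it is simple, pair-dense, and has the property that each atom or its converse is a function. The variety of relation algebras is itself finitely based (Tarski's equational axioms), so it suffices to adjoin to those axioms finitely many first-order sentences capturing these three properties; the model class of the resulting finite theory is then exactly $\mathbb{Z}_2$-$\textsf{GARRA}$.

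First I would record that simplicity is captured by the single sentence $(\forall x)(x = 0 \vee 1\,;x\,;1 = 1)$, using the standard fact that a relation algebra is simple precisely when $1\,;x\,;1 = 1$ for every nonzero $x$. For the function condition, recall that an element $f$ is a function exactly when $\converse f\,;f \leq \id$, an inequality between terms and hence first-order (the ordering $\leq$ being term-definable). Writing $\textsf{At}(a)$ for the first-order formula $a \neq 0 \wedge (\forall y)(y \leq a \rightarrow (y = 0 \vee y = a))$ that says $a$ is an atom, the third property becomes the single sentence $(\forall a)(\textsf{At}(a) \rightarrow (\converse a\,;a \leq \id \;\vee\; a\,;\converse a \leq \id))$. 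Pair-density, finally, is the sentence $(\forall x)\big((x \neq 0 \wedge x \leq \id) \rightarrow (\exists p)(p \neq 0 \wedge p \leq x \wedge p\,;0\hbox{\textquoteright}\,;p\,;0\hbox{\textquoteright}\,;p \leq \id)\big)$, which directly transcribes ``every nonzero element below $\id$ contains a pair.'' The conjunction of these three sentences, together with the relation-algebra axioms, is a finite first-order theory whose models are exactly the algebras described by the theorem.

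The step to watch is the claim that ``pair-dense'' and ``each atom or its converse is a function'' are genuinely first-order, since elsewhere in the argument these notions are handled through infinitary data---the full set $\at(\m R)$ of atoms and the infinite join $\id = \sum \pr(\m R)$. The point to make carefully is that neither axiom needs such sums: pair-density only asserts the existence of one pair beneath each nonzero subidentity element, and the function condition only quantifies, one element at a time, over those satisfying the first-order atom predicate. Because atomicity of a single element, the pair inequality, and functionality are each quantifier-free or single-quantifier formulas, no second-order or infinitary machinery is required, and finite axiomatizability follows at once.
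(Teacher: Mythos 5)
Your proposal is correct and matches the paper's own proof, which likewise writes down one first-order sentence each for simplicity, pair-density, and the atom-or-converse-is-a-function condition and appends them to the finitely many relation-algebra axioms. In fact your pair-density sentence is slightly more careful than the paper's, since you include the clause $p \neq 0$ that is needed to prevent the witness from being trivially taken as $0$.
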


\begin{proof}
    We give three axioms that imply, in turn, simplicity, pair-density, and the property that every atom or its converse is functional. (The axioms for relation algebras are assumed but not restated here.) 

    \begin{enumerate}
        \item $\forall x [(x>0) \rightarrow (1\;x\;1=1)]$
        \item $\forall x [([x>0] \wedge [x\leq 1\hbox{\textquoteright}]) \rightarrow \exists y ([0<y]\wedge [y\leq x] \wedge [y\;0\hbox{\textquoteright}\;y\;0\hbox{\textquoteright}\;y\leq 1\hbox{\textquoteright}])]$
        \item $\forall x  ( \forall y [(y<x) \rightarrow (y=0)] \rightarrow ([\breve{x}\;x\leq 1\hbox{\textquoteright}] \vee [x\;\breve{x}\leq 1\hbox{\textquoteright}])]$
    \end{enumerate}
\end{proof}

This is a $\Pi_2$ axiomatization; since neither $\textsf{GARRA}$ nor its complement $\textsf{RA}\setminus\textsf{GARRA}$ is closed under substructures, $\Pi_2$ cannot be replaced by $\Sigma_1$ nor $\Pi_1$. 

\section{Open Problems}

The notion of ``group-action representation'' seems to suggest many questions for further study. We state a few here. 

\begin{problem}
For a fixed odd prime $p$, is there a first-order characterization for relation algebras compatible with $\mathbb{Z}_p$-sets?
\end{problem}

Note that ``pair'' is natural to define in the language of relation algebra because relation algebra equations capture sentences in first order logic with no more than three variables \cite{TarskiGivant1987}. It is possible to say with three variables that a set contains two or fewer elements, but it is not possible to say that a set contains three or fewer elements.

\begin{problem}
    What is the relationship between group-representable relation algebras and group-action-representable relation algebras? Is $\textsf{GRA} $ properly contained in $ \textsf{GARRA}$? 
\end{problem}

\begin{problem}
    Which  relation algebras that are given a number in Maddux's numbering system have group-action representations? We have given two examples here. 
\end{problem}

\begin{problem}
    In \cite{Almetal2024}, the cyclic group spectrum, i.e., 
    
    \[
    \{n \in \mathbb{Z}^+ : \mathbf{A} \text{ is representable over } \mathbb{Z}/n\mathbb{Z} \}
    \]
    
    was determined for some small relation algebras. Can the ``group-action spectrum'' be determined for these algebras?  It would be a (possibly empty) subset of the ordinary spectrum. 
\end{problem}

\begin{problem}
If a relation algebra $\m b$ has a $\mathbb{Z}_2$-set representation, then $\m b$ is pair-dense, so every representation of $\m b$ has the structure described in \cite{Maddux1991}. This implies every representation of $\m b$ is a $\mathbb{Z}_2$-set representation.  Call a relation algebra $\m b$ \emph{strongly group-action representable} if every representation of $\m b$ is a group-action representation. Which relation algebras are strongly group-action representable?
\end{problem}

 \bibliographystyle{plain}
 \bibliography{references}
\end{document}